\documentclass[11pt]{article}

\usepackage[T1]{fontenc}
\usepackage[utf8]{inputenc}
\usepackage[margin=1in]{geometry}
\usepackage{amsmath,amssymb,amsthm,mathtools}
\usepackage{microtype}
\usepackage[bookmarks=false,hidelinks]{hyperref}

\title{Supersaturation for Eventown via Generator Switching}
\usepackage{authblk}

\author[1]{Zicheng Han\thanks{
\href{mailto:hzcqj2020@mail.ustc.edu.cn}{hzcqj2020@mail.ustc.edu.cn}}}

\author[1,2]{Xiande Zhang\thanks{
Corresponding author: \href{mailto:drzhangx@ustc.edu.cn}{drzhangx@ustc.edu.cn}}}

\author[1]{Yuhao Zhao\thanks{
\href{mailto:zhaoyh21@mail.ustc.edu.cn}{zhaoyh21@mail.ustc.edu.cn}}}

\affil[1]{School of Mathematical Sciences, University of Science and Technology of China, Hefei 230026, China}

\affil[2]{Hefei National Laboratory, University of Science and Technology of China, Hefei 230088, China}

\date{}

\newtheorem{theorem}{Theorem}[section]
\newtheorem{lemma}[theorem]{Lemma}
\newtheorem{proposition}[theorem]{Proposition}
\newtheorem{corollary}[theorem]{Corollary}
\newtheorem{conjecture}[theorem]{Conjecture}
\newtheorem{remark}[theorem]{Remark}

\newcommand{\one}{\mathbf 1}
\newcommand{\Span}{\operatorname{span}}

\begin{document}
%\markboth{\shorttitle}{\shorttitle}
\maketitle

\begin{abstract}
An eventown family is a family of even-sized subsets of $[n]$ in which every two distinct members have an even-sized intersection. A classical theorem of Berlekamp and Graver shows that the maximum size of such a family is $2^{\lfloor n/2\rfloor}$. The supersaturation problem for eventown asks how many odd-intersection pairs must occur when this extremal bound is exceeded. For a family $\mathcal F$ of even-sized subsets of $[n]$, let
$e(\mathcal F)$ denote the number of unordered pairs whose intersection size is odd. O'Neill conjectured that if
$|\mathcal F|=2^{\lfloor n/2\rfloor}+s$, then $e(\mathcal F)\ge s\,2^{\lfloor n/2\rfloor-1}$
for
\[
1\le s\le
2^{\lfloor n/2\rfloor}-2^{\lfloor n/4\rfloor}.
\]
Previously, the conjecture was known for $s=1,2$, and, for $s\le 2^{\lfloor n/8\rfloor}/n$ with $n$ sufficiently large. We prove the conjectured bound for
\[
1\le s\le \frac{2^{\lfloor n/2\rfloor}}{26},
\]
extending the known range to a fixed positive proportion of the extremal eventown size. The bound is sharp throughout this range. As further consequences, we derive a lower bound valid for
arbitrary excess $s$, which improves the previously known estimate in an
additional range. We also establish stability and removal results for
families of extremal size satisfying
$e(\mathcal F)<2^{\lfloor n/2\rfloor-1}$, showing that such a family is close to an extremal eventown family and can be made eventown by deleting a small number
of its members.
\end{abstract}

\medskip
\noindent\textbf{Keywords.}
Eventown, Supersaturation, Extremal set theory, Fourier analysis.

\medskip
\noindent\textbf{2020 Mathematics Subject Classification.}
05D05.

\section{Introduction}
Let \([n]:=\{1,2,\ldots,n\}\). A family \(\mathcal F\subseteq 2^{[n]}\) is
called an \emph{eventown family} if every member of \(\mathcal F\) has even cardinality and every two distinct members have an even-sized intersection.
Berlekamp \cite{Berlekamp} and Graver \cite{Graver} independently proved the fundamental eventown theorem, which asserts that every eventown family \(\mathcal F\) on \([n]\) satisfies \(|\mathcal F|\le 2^{\lfloor n/2\rfloor}\). Moreover, equality can be attained by partitioning \(2\lfloor n/2\rfloor\) elements of \([n]\) into \(\lfloor n/2\rfloor\) disjoint pairs and taking all possible unions of these pairs. This construction gives an eventown family of size \(2^{\lfloor n/2\rfloor}\), showing that this is the exact maximum.  

Once the exact threshold of an extremal problem has been determined, a natural further question is to what extent the defining restriction must be violated above that threshold. This is the \emph{supersaturation problem}: rather than merely asserting the existence of a forbidden configuration, one asks how many such configurations are forced by a prescribed excess.

Supersaturation has become an important theme in extremal set theory. For example, supersaturation versions of the Erd\H{o}s--Ko--Rado theorem concern disjoint pairs appearing above the extremal bound for intersecting
families, while those of Sperner's theorem concern comparable pairs, or
more generally short chains, appearing above the extremal bound for
antichains; see
\cite{BaloghLiuSharifzadehDasTran,BaloghWagner,BollobasLeader,DasGanSudakov1,DasGanSudakov2}.

The eventown problem fits naturally into the same framework. In this setting, a forbidden configuration is a pair of sets whose intersection has odd cardinality. Accordingly, for a family \(\mathcal F\) of even-sized subsets of \([n]\), define
\refstepcounter{footnote}
\[
e(\mathcal F)
=
\left|
\left\{
\{A,B\}\in\binom{\mathcal F}{2}\text{\footnotemark[\value{footnote}]}:
|A\cap B|\equiv 1\pmod 2
\right\}
\right|.
\]
\footnotetext[\value{footnote}]{Here \(\binom{\mathcal F}{2}\) denotes the collection of all two-element subsets of \(\mathcal F\).}
Equivalently, consider the graph whose vertices are all even-sized subsets of \([n]\), with two vertices adjacent whenever the corresponding sets have an odd-sized intersection. Then eventown families are precisely the independent sets in this graph, while the eventown theorem determines its independence number. The supersaturation problem for eventown therefore asks for the minimum possible value of \(e(\mathcal F)\) over all families satisfying \(|\mathcal F|=2^{\lfloor n/2\rfloor}+s\) with \(s\ge 1\).

O'Neill~\cite{ONeill} initiated the systematic study of this problem.
For instance, when \(n=2k=4\ell\), one of his constructions is as follows. For each
\(i\in[\ell]\), let
\(A_{2i-1}=\{4i-3,4i-2\}\), \(A_{2i}=\{4i-1,4i\}\),
\(B_{2i-1}=\{4i-3,4i\}\), and \(B_{2i}=\{4i-2,4i-1\}\). Define
\(\mathcal A=\{\bigcup_{j\in J}A_j:J\subseteq[k]\}\) and
\(\mathcal B=\{\bigcup_{j\in J}B_j:J\subseteq[k]\}\).
Both \(\mathcal A\) and \(\mathcal B\) are extremal eventown families.
Moreover, every \(B\in\mathcal B\setminus\mathcal A\) has odd intersection
with exactly \(2^{k-1}\) members of \(\mathcal A\),
and \(|\mathcal B\setminus\mathcal A|=2^k-2^\ell\). Thus, for every
\(1\le s\le 2^k-2^\ell\), adjoining \(s\) distinct members of
\(\mathcal B\setminus\mathcal A\) to \(\mathcal A\) gives a family of
size \(2^k+s\) with exactly \(s\,2^{k-1}\) odd-intersection pairs.
O'Neill proved that this value is optimal for \(s=1,2\) and proposed
the following conjecture.

\begin{conjecture}[O'Neill~\cite{ONeill}]\label{conj:oneill}
Let \(\mathcal F\) be a family of
\(2^{\lfloor n/2\rfloor}+s\) even-sized subsets of \([n]\). If
\(1\le s\le
2^{\lfloor n/2\rfloor}-2^{\lfloor n/4\rfloor}\), then
\(e(\mathcal F)\ge s\,2^{\lfloor n/2\rfloor-1}\).
\end{conjecture}

Several subsequent works have approached this conjecture from complementary directions.  Antipov and Cherkashin \cite{AntipovCherkashin} applied the
Lov\'asz theta approach and spectral methods to the odd-intersection graph to obtain half the conjectured bound for even \(n\).  Wei, Zhao, Zhang and Ge \cite{WeiZhaoZhangGe} used extremal graph theory and discrete Fourier analysis to study this problem. They proved that, for
all sufficiently large \(n\), Conjecture~\ref{conj:oneill} holds whenever \(s\le \frac{2^{\lfloor n/8\rfloor}}{n}\), and they also established a general lower bound of essentially half the conjectured value for arbitrary \(n\) and \(s\).  Hence Conjecture~\ref{conj:oneill} has been verified for a growing range of the excess, although that range is still exponentially smaller than the extremal eventown size
\(2^{\lfloor n/2\rfloor}\).

On the construction side, Niu, Hang and Cao \cite{NiuHangCao} recently showed
that the conjectured lower bound is attained in a much wider range: for
every \(1\le s\le 2^{\lfloor n/2\rfloor}-2\), there exists a family of \(2^{\lfloor n/2\rfloor}+s\) even-sized subsets with exactly \(s\,2^{\lfloor n/2\rfloor-1}\) odd-intersection pairs. They also obtained additional constructions attaining equality based on symmetric designs.  These results show that the lower bound predicted by O'Neill is best possible far beyond the range in which it was originally formulated.  The main remaining difficulty is therefore to prove the universal lower bound for substantially larger values of \(s\).

Our main result establishes the conjectured bound for a fixed positive
proportion of the extremal eventown size.

\begin{theorem}\label{thm:main}
Let \(\mathcal F\) be a family of
\(2^{\lfloor n/2\rfloor}+s\) even-sized subsets of \([n]\).  If
\(1\le s\le \frac{2^{\lfloor n/2\rfloor}}{26}\), then
\(e(\mathcal F)\ge s\,2^{\lfloor n/2\rfloor-1}\).
\end{theorem}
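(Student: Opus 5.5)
We recast the problem over $\mathbb F_2^n$. Identify sets with vectors, so $\mathcal F$ lies in the even-weight subspace $E=\one^\perp$. Two members $A,B$ form an odd pair exactly when $\langle A,B\rangle=1$, and each member is self-orthogonal since $|A|$ is even. Write $k=\lfloor n/2\rfloor$.

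The plan is an induction on $s$ driven by a \emph{generator switching} step. We fix a maximal eventown subfamily, or more precisely a subspace $W$ that captures most of $\mathcal F$. We then show that either every element outside $W$ pays its full share of $2^{k-1}$ odd pairs, or we can replace some generator of $W$ and strictly improve a potential function.

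\textbf{Fourier / counting identity.} For a family $\mathcal F\subseteq E$ we have
\[
2e(\mathcal F)=\sum_{A,B\in\mathcal F}\frac{1-(-1)^{\langle A,B\rangle}}{2}
=\frac{|\mathcal F|^2}{2}-\frac{1}{2}\sum_{A,B}(-1)^{\langle A,B\rangle}.
\]
The last sum equals $2^{-n}\sum_{x}\widehat{1_{\mathcal F}}(x)^2\cdot(\ldots)$. More usefully, for any subspace $U$ we have $\sum_{A\in\mathcal F}(-1)^{\langle A,x\rangle}$ summed over $x\in U$ equal to $|U|\cdot|\mathcal F\cap U^\perp|$. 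The basic lemma we aim for is the following. If $W$ is a totally isotropic subspace of $E$ of dimension $k$, and $B\in E\setminus W$, then $B$ is odd to exactly $2^{k-1}$ elements of $W$, since $\langle B,\cdot\rangle$ is a nonzero functional on $W$ (nonzero because $W^\perp\cap E=W$ when $W$ is maximal isotropic; for odd $n$ one handles the extra coordinate). So if $\mathcal F\supseteq W$, we get $e(\mathcal F)\ge s2^{k-1}$ immediately. The whole difficulty is that $\mathcal F$ need not contain a full maximal isotropic subspace.

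\textbf{Switching step.} Choose the isotropic subspace $W$, of dimension $k$, that maximizes $|\mathcal F\cap W|$. Write $\mathcal F=\mathcal F_W\cup\mathcal R$ with $|\mathcal F_W|=2^k-t$ and $|\mathcal R|=s+t$.

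First, each $B\in\mathcal R$ is odd to $2^{k-1}$ elements of $W$, hence to at least $2^{k-1}-t$ elements of $\mathcal F_W$. This gives $e\ge (s+t)(2^{k-1}-t)$ plus the odd pairs inside $\mathcal R$, and this already suffices when $t\le s$-ish is small relative to $2^{k-1}$.

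When $t$ is large, we use that $\mathcal R$ itself has $s+t$ elements. We then show that $\mathcal R$ spans many odd pairs internally, by applying the eventown theorem or the induction hypothesis to $\mathcal R$ together with the maximality of $W$. The key point is that maximality implies that no switch of a generator of $W$, replacing a hyperplane-coset of $W$ by a coset through elements of $\mathcal R$, increases $|\mathcal F\cap W|$. This forces every "switched" subspace $W'$ to meet $\mathcal R$ in at most as many points as $W\setminus W'$ meets $\mathcal F$. We would quantify this via the character sums above: for each linear functional $\phi$ on $W$, the elements of $\mathcal R$ with $\langle B,\cdot\rangle|_W=\phi$ lie in a coset and can be paired with $\ker\phi$ to form a new isotropic subspace, provided they are mutually orthogonal.

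\textbf{Main obstacle.} The crux is combining the two regimes so the constant works out. We need a bound of the shape $e\ge (s+t)(2^{k-1}-t)+e(\mathcal R)$ with $e(\mathcal R)\gtrsim t\,2^{k-1}$ from the switching/maximality argument. Showing that, when $|\mathcal R|=s+t$ with $s\le 2^k/26$, the deficit $t(s+t)$ is compensated requires a careful stability estimate. A set with few odd pairs must be mostly contained in one maximal isotropic space. We would prove this by the Fourier second-moment bound: $\sum_x\widehat{1_{\mathcal F}}(x)^2$ concentrated on a small set forces a large isotropic subspace. The constant $26$ should emerge from optimizing this inequality, and that quantitative step is where we expect most effort.
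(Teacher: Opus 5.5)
\textbf{There is a genuine gap: your proposal gives no argument for the case $|\mathcal R|>2^{k-1}$, and the route you sketch for it cannot work.}

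Your setup and first regime are sound. Since $(s+t)(2^{k-1}-t)=s2^{k-1}+t(2^{k-1}-s-t)$, the trivial count finishes exactly when $|\mathcal R|=s+t\le 2^{k-1}$; this is the paper's small-remainder lemma. Your observation that maximality of $W$ bounds $|\mathcal R\cap(W'\setminus W)|$ by $|\mathcal F\cap(W\setminus W')|$ for a switched generator $W'$ is also the right move. Beyond that, you offer only the hope that a stability estimate will give $e(\mathcal R)\gtrsim t2^{k-1}$ and that $26$ will emerge.

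The decomposition $e\ge (s+t)(2^{k-1}-t)+e(\mathcal R)$ fails as set up. Once $t\ge 2^{k-1}$ the cross term is vacuous, so you would need $e(\mathcal R)\ge t(s+t-2^{k-1})$ from internal edges alone. Suppose instead $\mathcal F$ is split evenly among random subsets of three generators with small pairwise intersections. Then $t\approx\frac23 2^k$ and $e(\mathcal R)\approx 2^{2k}/18$, which is below $t(s+t-2^{k-1})\approx 2^{2k}/9$ and far below $t2^{k-1}$. The theorem holds there only because the cross edges $e(\mathcal F_W,\mathcal R)\approx 2^{2k}/9$ are large, and the trivial count discards them. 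Induction on $s$ does not reach $\mathcal R$ either, since $|\mathcal R|$ may be below $2^k$.

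What is missing is a cross-edge bound that stays strong when $\mathcal R$ is large. The paper obtains it by making your switch quantitative.
\begin{itemize}
\item Group $\mathcal R$ by cosets $\Gamma\ne W$ of $W$. Each $\Gamma$ gives a distinct nonzero functional $\lambda_\Gamma$ on $W$. Every element of $\Gamma$ has exactly $b_\Gamma=|\{y\in\mathcal F_W:\lambda_\Gamma(y)=1\}|$ neighbours in $\mathcal F_W$, so $e(\mathcal F_W,\mathcal R)=\sum_\Gamma m_\Gamma b_\Gamma$ with $m_\Gamma=|\mathcal R\cap\Gamma|$.
\item For any $z\in\mathcal R\cap\Gamma$, $\ker\lambda_\Gamma\oplus\langle z\rangle$ is a generator containing the whole affine half $z+\ker\lambda_\Gamma$; no mutual-orthogonality hypothesis is needed. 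Maximality then gives $|\mathcal R\cap(z+\ker\lambda_\Gamma)|\le b_\Gamma$. Hence $m_\Gamma\le 2b_\Gamma$ and $\sum_\Gamma m_\Gamma^2\le 2e(\mathcal F_W,\mathcal R)$.
\item Parseval for $1_{\mathcal F_W}$ on $W$ gives $\sum_\Gamma(|\mathcal F_W|-2b_\Gamma)^2\le 2^k|\mathcal F_W|$.
\item Cauchy--Schwarz then yields
\[
\bigl(|\mathcal F_W||\mathcal R|-2e(\mathcal F_W,\mathcal R)\bigr)^2\le 2^{k+1}|\mathcal F_W|\,e(\mathcal F_W,\mathcal R).
\]
For example, $|\mathcal R|>\frac58 2^k$ forces $e(\mathcal F_W,\mathcal R)>\frac3{32}2^k|\mathcal F_W|$.
\end{itemize}

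The paper also needs no stability theorem. It peels off maximum eventown layers $\mathcal E_1=\mathcal F_W,\mathcal E_2,\dots$ greedily, with nonincreasing sizes and $e(\mathcal F)=\sum_i e(\mathcal E_i,\mathcal R_i)$. It applies the bound above to every layer whose remainder exceeds $\frac58 2^k$; by monotonicity these layers have total size at least $\frac12(\frac38 2^k+s)$. The case $2^{k-1}<|\mathcal R_1|\le\frac58 2^k$ is handled by the companion bound $e(\mathcal F_W,\mathcal R)>2^{k-4}|\mathcal F_W|$. The constant $26$ comes from this bookkeeping.
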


In particular, Theorem~\ref{thm:main} enlarges the previously known exact
range from \(2^{\lfloor n/8\rfloor}/n\) to a constant proportion of
\(2^{\lfloor n/2\rfloor}\).

Our proof is based on a vector-space model over \(\mathbb F_2\). We choose
a maximum eventown subfamily and embed it into a generator, namely, a
maximum-dimensional subspace whose vectors are pairwise orthogonal with
respect to the standard bilinear form over \(\mathbb F_2\). We then combine
switching arguments with Fourier analysis to estimate the number of
odd-intersection pairs between the chosen subfamily and the remaining sets.
A greedy decomposition of the whole family into eventown layers then yields
the desired lower bound. The same approach also gives an improved lower
bound for general \(s\), together with stability and removal results for
families with few odd-intersection pairs.

\paragraph{Organization.} The rest of this paper is organized as follows. In Section~\ref{sec:model}, we introduce the vector-space model and give some basic properties of generators.
In Section~\ref{sec:switching}, we prove the key switching--Fourier estimate. We then prove Theorem~\ref{thm:main} and derive the corresponding averaging result in Section~\ref{sec:main-proof}. Finally, Section~\ref{sec:stability} establishes the stability and removal results.

\section{The vector-space model and generators}\label{sec:model}

Identify a subset of \([n]\) with its characteristic vector in
\(\mathbb F_2^n\), and use the standard bilinear form
\[
 \langle x,y\rangle=\sum_{i=1}^n x_i y_i\pmod 2.
\]
Write $\one=(1,\dots,1)$ and set
\[
 \mathcal V=\{x\in\mathbb F_2^n:\langle x,\one\rangle=0\}.
\]
The vectors in \(\mathcal V\) are precisely the characteristic vectors of
even-sized subsets.  Since
\(\langle x,x\rangle=\langle x,\one\rangle=0\) for every
\(x\in\mathcal V\), the restriction of the bilinear form to
\(\mathcal V\) is alternating.  Two vectors \(x\ne y\) in \(\mathcal V\) are adjacent in the
\emph{odd-intersection graph} if \(\langle x,y\rangle=1\), where the graph has the vertex set \(\mathcal V\).

For \(\mathcal A\subseteq\mathcal V\), let \(e(\mathcal A)\) denote the
number of edges induced by \(\mathcal A\).  For disjoint
\(\mathcal A,\mathcal B\subseteq\mathcal V\), let
\[
 e(\mathcal A,\mathcal B)
 =
 \bigl|\{\{x,y\}:x\in\mathcal A,\ y\in\mathcal B,
          \langle x,y\rangle=1\}\bigr|.
\]
Thus \(e(\mathcal A,\mathcal B)\) is the number of cross-edges between
\(\mathcal A\) and \(\mathcal B\).

A subspace \(\mathcal W\le\mathcal V\) is \emph{totally isotropic} if \(\langle w,w'\rangle=0\) for all \(w,w'\in\mathcal W\).  A totally isotropic subspace is \emph{maximal} if it is not properly contained in another totally isotropic subspace.  It is \emph{maximum-dimensional} if it has the maximum dimension.  We call every maximum-dimensional totally isotropic subspace of \(\mathcal V\) a \emph{generator}.

\begin{lemma}[Structure and extension]\label{lem:structure}
Every totally isotropic subspace of \(\mathcal V\) is contained in a
generator.  Every generator has dimension \(\lfloor n/2\rfloor\), and hence
has \(2^{\lfloor n/2\rfloor}\) elements.  Moreover, for every generator
\(\mathcal W\),
\[
 \mathcal W^\perp\cap\mathcal V=\mathcal W,
\]
where \(\mathcal W^\perp\) is taken in the full space
\(\mathbb F_2^n\).
\end{lemma}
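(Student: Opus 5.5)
We would organize the proof around the radical of the alternating form on \(\mathcal V\). Let \(R=\mathcal V\cap\mathcal V^\perp=\mathcal V\cap\Span\{\one\}\). This equals \(\{0\}\) when \(n\) is odd, and \(\{0,\one\}\) when \(n\) is even (since \(\one\in\mathcal V\) exactly when \(n\) is even). The quotient \(\bar{\mathcal V}=\mathcal V/R\) then carries a nondegenerate alternating form. Its dimension is \(n-1\) for odd \(n\) and \(n-2\) for even \(n\), so in both cases it equals \(2m\) with \(m=\lfloor n/2\rfloor-\epsilon\), where \(\epsilon=0\) for odd \(n\) and \(\epsilon=1\) for even \(n\).

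The first step is the extension property. Take a totally isotropic \(\mathcal W\le\mathcal V\). Then \(\mathcal W+R\) is again totally isotropic, because \(R\) is orthogonal to all of \(\mathcal V\). If \(\mathcal W\) is maximal and \(\mathcal W^\perp\cap\mathcal V\ne\mathcal W\), pick \(x\in(\mathcal W^\perp\cap\mathcal V)\setminus\mathcal W\). Then \(\mathcal W+\langle x\rangle\) is totally isotropic, since \(\langle x,x\rangle=0\) on \(\mathcal V\). This contradicts maximality. Hence every maximal totally isotropic subspace satisfies \(\mathcal W^\perp\cap\mathcal V=\mathcal W\), and in particular contains \(R\).

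The second step is a dimension count. The form on \(\mathcal V\) is the restriction of the nondegenerate form on \(\mathbb F_2^n\), so \(\dim\mathcal W^\perp=n-\dim\mathcal W\). Since \(\mathcal V\) is a hyperplane, \(\dim(\mathcal W^\perp\cap\mathcal V)\) is \(n-\dim\mathcal W\) or \(n-\dim\mathcal W-1\). The first case occurs exactly when \(\mathcal W^\perp\subseteq\mathcal V\), i.e.\ when \(\one\in\mathcal W\). Setting this equal to \(\dim\mathcal W\) gives:
\begin{itemize}
\item for odd \(n\), \(\one\notin\mathcal V\supseteq\mathcal W\), so \(2\dim\mathcal W=n-1\) and \(\dim\mathcal W=(n-1)/2=\lfloor n/2\rfloor\);
\item for even \(n\), \(\one\in R\subseteq\mathcal W\), so \(2\dim\mathcal W=n\) and \(\dim\mathcal W=n/2\).
\end{itemize}
Thus every maximal totally isotropic subspace has dimension exactly \(\lfloor n/2\rfloor\). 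Maximal and maximum-dimensional therefore coincide, so generators are exactly the maximal totally isotropic subspaces. Every totally isotropic subspace extends to a maximal one by repeatedly adjoining vectors from \(\mathcal W^\perp\cap\mathcal V\setminus\mathcal W\), which is a finite process. Hence every totally isotropic subspace lies in a generator, and the identity \(\mathcal W^\perp\cap\mathcal V=\mathcal W\) holds for all generators.

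The main point of care is the parity bookkeeping in the even case. There one must confirm that a maximal \(\mathcal W\) contains \(\one\), so that \(\mathcal W^\perp\subseteq\one^\perp=\mathcal V\) and the intersection with \(\mathcal V\) loses no dimension. This is handled by the earlier observation that adding \(R\) preserves total isotropy. As an alternative cross-check, one can use the standard fact that maximal totally isotropic subspaces of a \(2m\)-dimensional symplectic space have dimension \(m\). Lifting to \(\mathcal V\) then adds \(\dim R\), giving \(m+\epsilon=\lfloor n/2\rfloor\).
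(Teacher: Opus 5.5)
Your proof is correct, but it takes a different route from the paper's.

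The paper starts from the standard fact that, in a nondegenerate alternating space of dimension \(2\ell\), totally isotropic subspaces have dimension at most \(\ell\) and extend to dimension \(\ell\). It then splits into cases. For \(n\) odd, the form on \(\mathcal V\) is itself nondegenerate. For \(n\) even, the paper passes to the quotient \(\mathcal V/\langle\one\rangle\) and lifts back. It determines the dimension first and only afterwards deduces \(\mathcal W^\perp\cap\mathcal V=\mathcal W\) by comparing dimensions.

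You reverse the order. Maximality by inclusion immediately forces \(\mathcal W^\perp\cap\mathcal V=\mathcal W\), because \(\langle x,x\rangle=0\) on \(\mathcal V\) and \(\mathcal W\subseteq\mathcal W^\perp\cap\mathcal V\) holds trivially. You then read off the dimension in the ambient space \(\mathbb F_2^n\), where the form is nondegenerate. There \(\dim\mathcal W^\perp=n-\dim\mathcal W\), and intersecting with the hyperplane \(\mathcal V=\one^\perp\) loses a dimension exactly when \(\one\notin\mathcal W\), using \(\mathcal W^{\perp\perp}=\mathcal W\). Parity enters only through whether \(\one\in\mathcal V\). In the even case, \(\one\in\mathcal V^\perp\cap\mathcal V\subseteq\mathcal W^\perp\cap\mathcal V=\mathcal W\) replaces the quotient construction.

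What your approach buys is a uniform, self-contained argument with no case split. It does not need the symplectic fact, and it shows explicitly that maximal and maximum-dimensional totally isotropic subspaces coincide. The paper's approach is shorter because it leans on a familiar textbook result, at the cost of separate odd and even treatments.
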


\begin{proof}
We first recall a standard fact about a nondegenerate alternating space of
dimension \(2\ell\).  If \(U\) is totally isotropic, then
\(U\subseteq U^\perp\), so
\[
 2\dim U\le \dim U+\dim U^\perp=2\ell.
\]
Thus \(\dim U\le\ell\).  If \(\dim U<\ell\), then
\(U^\perp\) properly contains \(U\); choosing
\(v\in U^\perp\setminus U\) enlarges \(U\) to the totally isotropic
subspace \(U+\langle v\rangle\).  Repeating this step extends \(U\) to a
totally isotropic subspace of dimension \(\ell\).

Suppose first that \(n=2\ell+1\).  Then \(\mathcal V=\one^\perp\) has
dimension \(2\ell\).  Its radical is
\[
 \mathcal V\cap\mathcal V^\perp
 =
 \mathcal V\cap\langle\one\rangle
 =\{0\},
\]
because \(\one\notin\mathcal V\) when \(n\) is odd.  Hence the restricted
form is nondegenerate, and the preceding argument shows that every totally
isotropic subspace extends to one of dimension
\(\ell=\lfloor n/2\rfloor\).  If \(\mathcal W\) has that dimension, then
its orthogonal complement inside \(\mathcal V\) also has dimension
\(\ell\), and therefore equals \(\mathcal W\).  This is precisely
\(\mathcal W^\perp\cap\mathcal V=\mathcal W\).

Now suppose that \(n=2\ell\).  The radical of the restricted form on
\(\mathcal V\) is \(\langle\one\rangle\).  The quotient \(\mathcal V/\langle\one\rangle\) is a nondegenerate alternating space of dimension \(2\ell-2\).  Given a totally isotropic subspace \(U\le\mathcal V\), the subspace
\(U+\langle\one\rangle\) is totally isotropic, and its image in the quotient extends to a totally isotropic subspace of dimension \(\ell-1\).  The full
preimage of that extension is a totally isotropic subspace of \(\mathcal V\) of dimension \(\ell=\lfloor n/2\rfloor\) containing
\(U\).  No totally isotropic subspace can have larger dimension, because its
image after adjoining the radical is isotropic in the quotient.

Finally, if \(\mathcal W\) is a generator in the even case, then
\(\dim\mathcal W=\ell\).  Since the standard form on
\(\mathbb F_2^{2\ell}\) is nondegenerate,
\(\dim\mathcal W^\perp=\ell\).  Total isotropy gives
\(\mathcal W\subseteq\mathcal W^\perp\), and equality follows.  In
particular, \(\mathcal W^\perp\cap\mathcal V=\mathcal W\).
\end{proof}

\begin{corollary}\label{cor:half-neighbors}
If \(\mathcal W\) is a generator and
\(x\in\mathcal V\setminus\mathcal W\), then
\[
 \bigl|\{w\in\mathcal W:\langle x,w\rangle=1\}\bigr|
 =2^{\lfloor n/2\rfloor-1}.
\]
\end{corollary}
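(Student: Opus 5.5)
The plan is to view the map \(w\mapsto\langle x,w\rangle\) as a linear functional on \(\mathcal W\) and to count the vectors on which it takes the value \(1\). First we check that this functional is not identically zero. Suppose \(\langle x,w\rangle=0\) for every \(w\in\mathcal W\). Then \(x\in\mathcal W^\perp\), where the complement is taken in \(\mathbb F_2^n\). Since \(x\in\mathcal V\) by hypothesis, Lemma~\ref{lem:structure} gives \(x\in\mathcal W^\perp\cap\mathcal V=\mathcal W\). This contradicts \(x\notin\mathcal W\), so the functional is nonzero.

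Next we use the standard counting fact for nonzero functionals. A nonzero linear functional \(f\colon\mathcal W\to\mathbb F_2\) is surjective, and its kernel is a subspace of index \(2\). The set \(f^{-1}(1)\) is therefore a coset of \(\ker f\), so it has exactly \(|\mathcal W|/2\) elements.

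By Lemma~\ref{lem:structure} we have \(|\mathcal W|=2^{\lfloor n/2\rfloor}\). The number of \(w\in\mathcal W\) with \(\langle x,w\rangle=1\) is thus \(2^{\lfloor n/2\rfloor-1}\), as claimed.

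There is no real obstacle here. The only substantive input is the identity \(\mathcal W^\perp\cap\mathcal V=\mathcal W\) from Lemma~\ref{lem:structure}, which guarantees that the functional is nonzero. The condition \(x\in\mathcal V\) is essential: when \(n\) is even, \(\one\in\mathcal W^\perp\), so the statement could fail for vectors outside \(\mathcal V\).
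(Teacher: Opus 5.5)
Your proof is correct and is the paper's argument: Lemma~\ref{lem:structure} (\(\mathcal W^\perp\cap\mathcal V=\mathcal W\)) makes the functional nonzero, and a nonzero functional over \(\mathbb F_2\) equals \(1\) on exactly half of \(\mathcal W\). One correction to your closing aside, which does not affect the proof: you have the parity backwards. The count fails outside \(\mathcal V\) only for odd \(n\), for example at \(x=\one\in\mathcal W^\perp\setminus\mathcal V\). For even \(n\) one has \(\mathcal W^\perp=\mathcal W\subseteq\mathcal V\) and \(\one\in\mathcal W\), so the count holds for every \(x\notin\mathcal W\).
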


\begin{proof}
By Lemma~\ref{lem:structure}, the functional
\(w\mapsto\langle x,w\rangle\) on \(\mathcal W\) is nonzero.  Every
nonzero linear functional on a finite vector space over \(\mathbb F_2\)
takes the value \(1\) on exactly half of the elements.
\end{proof}

\begin{lemma}[Maximum eventown subfamilies]\label{lem:maximum-eventown}
Every eventown subfamily of \(\mathcal V\) is contained in a generator.  More
precisely, let \(\mathcal R\subseteq\mathcal V\), let \(\mathcal E\) be a
maximum eventown subfamily of \(\mathcal R\), and let \(\mathcal W\) be any
generator containing \(\mathcal E\).  Then
\[
 \mathcal R\cap\mathcal W=\mathcal E,
\]
and \(\mathcal W\) maximizes \(|\mathcal R\cap\mathcal W'|\) over all
generators \(\mathcal W'\).  Conversely, if \(\mathcal W\) is a generator
maximizing \(|\mathcal R\cap\mathcal W|\), then
\(\mathcal R\cap\mathcal W\) is a maximum eventown subfamily of
\(\mathcal R\).
\end{lemma}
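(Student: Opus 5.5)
The plan is to reduce everything to the first assertion, that every eventown subfamily lies in a generator, and then derive the remaining claims by comparing sizes.

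\emph{Step 1: embedding an eventown family.} Let \(\mathcal E\subseteq\mathcal V\) be eventown. Its members are pairwise orthogonal, and each is self-orthogonal, since \(\langle x,x\rangle=\langle x,\one\rangle=0\) for \(x\in\mathcal V\). By bilinearity the span \(\Span(\mathcal E)\) is then a totally isotropic subspace of \(\mathcal V\). Lemma~\ref{lem:structure} extends it to a generator, so \(\mathcal E\) is contained in a generator. Conversely, any subset of a totally isotropic subspace is eventown. Hence the eventown subfamilies of \(\mathcal R\) are exactly the sets \(\mathcal R\cap S\) with \(S\) totally isotropic. Equivalently, they are exactly the subsets of the sets \(\mathcal R\cap\mathcal W'\) with \(\mathcal W'\) a generator.

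\emph{Step 2: the forward claims.} Let \(\mathcal E\) be a maximum eventown subfamily of \(\mathcal R\), and let \(\mathcal W\supseteq\mathcal E\) be a generator. Since \(\mathcal W\) is totally isotropic, \(\mathcal R\cap\mathcal W\) is an eventown subfamily of \(\mathcal R\) containing \(\mathcal E\). Maximality of \(|\mathcal E|\) then forces \(\mathcal R\cap\mathcal W=\mathcal E\). Now take any generator \(\mathcal W'\). The set \(\mathcal R\cap\mathcal W'\) is eventown, so
\[
|\mathcal R\cap\mathcal W'|\le|\mathcal E|=|\mathcal R\cap\mathcal W|.
\]
Thus \(\mathcal W\) is a maximizer.

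\emph{Step 3: the converse.} Let \(\mathcal W\) maximize \(|\mathcal R\cap\mathcal W|\). Take any maximum eventown subfamily \(\mathcal E\), together with a generator \(\mathcal W_0\supseteq\mathcal E\). Then
\[
|\mathcal E|=|\mathcal R\cap\mathcal W_0|\le|\mathcal R\cap\mathcal W|\le|\mathcal E|,
\]
where the last inequality holds because \(\mathcal R\cap\mathcal W\) is eventown. Equality throughout shows that \(\mathcal R\cap\mathcal W\) is a maximum eventown subfamily.

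There is no real obstacle here. The only point needing care is Step 1: we must check that the span of a pairwise-orthogonal family of even-weight vectors is isotropic. This relies on the diagonal terms \(\langle x,x\rangle\) vanishing, which is exactly why we work inside \(\mathcal V\).
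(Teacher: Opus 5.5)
Your proof is correct and follows the paper's argument essentially step for step: you embed $\Span(\mathcal E)$ in a generator via Lemma~\ref{lem:structure}, derive the forward claims from the maximality of $|\mathcal E|$ because every $\mathcal R\cap\mathcal W'$ is eventown, and obtain the converse by comparing with a generator that contains an eventown family. One harmless slip is your aside that the eventown subfamilies of $\mathcal R$ are \emph{exactly} the sets $\mathcal R\cap S$ with $S$ totally isotropic, which is false (if $0\in\mathcal R$, no family omitting $0$ has this form); the ``subsets of $\mathcal R\cap\mathcal W'$'' version is the correct one, and Steps 2--3 use neither.
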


\begin{proof}
If \(\mathcal E\) is eventown, then every two vectors in
\(\mathcal E\), including equal vectors, have inner product \(0\).  By
bilinearity, \(\Span(\mathcal E)\) is totally isotropic.  Lemma
\ref{lem:structure} therefore embeds \(\mathcal E\) in a generator.

Now suppose that \(\mathcal E\) is maximum in \(\mathcal R\) and that
\(\mathcal W\) contains \(\mathcal E\).  The family
\(\mathcal R\cap\mathcal W\) is eventown and contains \(\mathcal E\), so
the maximum cardinality of \(\mathcal E\) gives
\(\mathcal R\cap\mathcal W=\mathcal E\).  For any other generator
\(\mathcal W'\), the family \(\mathcal R\cap\mathcal W'\) is eventown;
therefore
\[
 |\mathcal R\cap\mathcal W'|
 \le |\mathcal E|
 =|\mathcal R\cap\mathcal W|.
\]
Thus \(\mathcal W\) is a maximum-intersection generator for
\(\mathcal R\).

Conversely, suppose that \(\mathcal W\) maximizes the intersection with
\(\mathcal R\).  The family \(\mathcal R\cap\mathcal W\) is eventown.  If
\(\mathcal E'\subseteq\mathcal R\) is any eventown family, embed it in a
generator \(\mathcal W'\).  Then
\[
 |\mathcal E'|
 \le |\mathcal R\cap\mathcal W'|
 \le |\mathcal R\cap\mathcal W|.
\]
Hence \(\mathcal R\cap\mathcal W\) has maximum cardinality among eventown
subfamilies of \(\mathcal R\).
\end{proof}

\begin{remark}[Sharpness]\label{rem:sharpness}
The lower bound in Theorem~\ref{thm:main} is best possible. Indeed,
the constructions of O'Neill~\cite{ONeill} and, more generally, those of
Niu, Hang and Cao~\cite{NiuHangCao} provide families \(\mathcal F\) of
\(2^{\lfloor n/2\rfloor}+s\) even-sized subsets satisfying
\[
e(\mathcal F)=s\,2^{\lfloor n/2\rfloor-1}
\]
throughout a range containing the one in Theorem~\ref{thm:main}.
Since these constructions are not used in our proof, we do not repeat
their details here.
\end{remark}

\section{The switching--Fourier estimate}\label{sec:switching}

The next lemma is the main input to the proof of
Theorem~\ref{thm:main}.  Its formulation applies directly to every remainder
that occurs in the greedy decomposition.

\begin{lemma}[Switching--Fourier estimate]\label{lem:switching-fourier}
Let \(\mathcal R\subseteq\mathcal V\) be a nonempty subset. Let \(\mathcal E\) be a
maximum eventown subfamily of \(\mathcal R\), and write \(\mathcal Y=\mathcal R\setminus\mathcal E\). 
Then
\begin{equation}
 \bigl(|\mathcal E||\mathcal Y|-2e(\mathcal E,\mathcal Y)\bigr)^2
 \le
 2^{\lfloor n/2\rfloor+1}|\mathcal E|e(\mathcal E,\mathcal Y).
\label{eq:switching-fourier}
\end{equation}
Consequently, we have
\begin{equation}
 |\mathcal Y|
 \le
 \frac{2e(\mathcal E,\mathcal Y)}{|\mathcal E|}
 +
 \sqrt{
       \frac{2^{\lfloor n/2\rfloor+1}e(\mathcal E,\mathcal Y)}{|\mathcal E|}
      }.
\label{eq:remainder-bound}
\end{equation}
\end{lemma}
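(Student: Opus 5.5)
The plan is to fix a generator \(\mathcal W\) containing \(\mathcal E\) (Lemma~\ref{lem:maximum-eventown}), so that \(\mathcal R\cap\mathcal W=\mathcal E\) and every \(y\in\mathcal Y\) lies in \(\mathcal V\setminus\mathcal W\). Write \(k=\lfloor n/2\rfloor\), let \(f\) be the indicator of \(\mathcal E\) on \(\mathcal W\), and for \(y\in\mathcal V\) set \(\hat f(y)=\sum_{w\in\mathcal E}(-1)^{\langle y,w\rangle}=|\mathcal E|-2d(y)\), where \(d(y)\) is the number of neighbours of \(y\) in \(\mathcal E\). Then
\[
S:=|\mathcal E||\mathcal Y|-2e(\mathcal E,\mathcal Y)=\sum_{y\in\mathcal Y}\hat f(y).
\]
The characters \(w\mapsto(-1)^{\langle y,w\rangle}\) of \(\mathcal W\) depend only on the coset \(C=y+\mathcal W\). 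Two vectors \(y,y'\in\mathcal V\) give the same character iff \(y-y'\in\mathcal W^\perp\cap\mathcal V=\mathcal W\), by Lemma~\ref{lem:structure}. So distinct cosets give distinct characters of \(\mathcal W\). Grouping \(\mathcal Y\) by cosets, with \(c_C=|\mathcal R\cap C|\) and \(\hat f_C,d_C\) the common values on \(C\), we get \(S=\sum_C c_C\hat f_C\) and \(e(\mathcal E,\mathcal Y)=\sum_C c_Cd_C\). Parseval on \(\mathcal W\), applied to distinct characters, gives \(\sum_C\hat f_C^2\le 2^k|\mathcal E|\).

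The key step, and the main obstacle, is a switching bound \(c_C\le 2d_C\) for each coset \(C\ne\mathcal W\).

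\emph{The switching.} For \(y\in C\cap\mathcal R\), consider \(\mathcal W_y=(\mathcal W\cap y^\perp)+\langle y\rangle\). Since \(\langle y,y\rangle=0\) and \(y\notin\mathcal W\), this is totally isotropic of dimension \(k\), hence a generator. For \(y'\in C\) we have \(y'-y\in\mathcal W\) and \(\langle y,y'-y\rangle=\langle y,y'\rangle\), so \(y'\in y+(\mathcal W\cap y^\perp)\) exactly when \(\langle y,y'\rangle=0\). Therefore
\[
\mathcal R\cap\mathcal W_y\supseteq(\mathcal E\cap y^\perp)\cup\{y'\in C\cap\mathcal R:\langle y,y'\rangle=0\}.
\]
Maximality of \(|\mathcal R\cap\mathcal W|\) (Lemma~\ref{lem:maximum-eventown}) then gives
\[
|\mathcal E|-d_C+\#\{y'\in C\cap\mathcal R:\langle y,y'\rangle=0\}\le|\mathcal E|,
\]
so at most \(d_C\) elements of \(C\cap\mathcal R\) are orthogonal to \(y\).

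\emph{Bounding the other half.} Fix \(y_0\in C\cap\mathcal R\). If some \(y_1\in C\cap\mathcal R\) has \(\langle y_0,y_1\rangle=1\), then for any \(y'\in C\) we have \(\langle y_1,w\rangle=\langle y_0,w\rangle\) for \(w\in\mathcal W\), since \(y_1-y_0\in\mathcal W\subseteq\mathcal W^\perp\). This gives
\[
\langle y_1,y'\rangle=\langle y_1,y_0\rangle+\langle y_0,y'-y_0\rangle=1+\langle y_0,y'\rangle.
\]
So every \(y'\) with \(\langle y_0,y'\rangle=1\) is orthogonal to \(y_1\), and hence there are at most \(d_C\) of them. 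Altogether \(c_C\le 2d_C\), and in particular \(c_C^2\le 2c_Cd_C\).

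Cauchy--Schwarz then finishes the first inequality:
\[
S^2\le\Bigl(\sum_C c_C^2\Bigr)\Bigl(\sum_C\hat f_C^2\Bigr)\le 2e(\mathcal E,\mathcal Y)\cdot 2^k|\mathcal E|,
\]
which is \eqref{eq:switching-fourier}. For \eqref{eq:remainder-bound}, take square roots to get \(|\mathcal E||\mathcal Y|-2e(\mathcal E,\mathcal Y)\le|S|\le\sqrt{2^{k+1}|\mathcal E|e(\mathcal E,\mathcal Y)}\), and divide by \(|\mathcal E|\). Here \(|\mathcal E|\ge1\) because \(\mathcal R\) is nonempty and any single vector of \(\mathcal V\) is eventown. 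The only delicate points I expect are the bookkeeping that the two halves of \(C\cap\mathcal R\) are bounded by the same \(d_C\), and that distinct cosets give distinct characters so that Parseval applies.
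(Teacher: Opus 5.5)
Your proposal is correct and follows essentially the same route as the paper. You fix a generator $\mathcal W\supseteq\mathcal E$, group $\mathcal Y$ by cosets, and bound each half of a coset by $d_C$ using the one-dimensional switch $(\mathcal W\cap y^\perp)+\langle y\rangle$; the resulting $c_C\le 2d_C$ is then combined with Parseval and Cauchy--Schwarz, and your halves (vectors of $C$ orthogonal, respectively non-orthogonal, to a fixed $y_0\in C\cap\mathcal R$) are exactly the paper's affine $K_\Gamma$-cosets $y_0+K_\Gamma$ and its complement in $C$.
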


\begin{proof}
By Lemma~\ref{lem:maximum-eventown}, choose a generator \(\mathcal W\)
containing \(\mathcal E\).  Then
\(\mathcal R\cap\mathcal W=\mathcal E, \mathcal Y=\mathcal R\setminus\mathcal W\), 
and \(\mathcal W\) maximizes \(|\mathcal R\cap\mathcal W'|\) over all
generators \(\mathcal W'\).

Partition \(\mathcal Y\) by the cosets of \(\mathcal W\) in
\(\mathcal V\) other than \(\mathcal W\) itself.  For an occupied coset
\(\Gamma=x+\mathcal W\), define
\[
 \lambda_\Gamma:\mathcal W\longrightarrow\mathbb F_2,
 \qquad
 \lambda_\Gamma(w)=\langle x,w\rangle.
\]
This definition is independent of the representative \(x\). Indeed, replacing
\(x\) by \(x+w_0\), with \(w_0\in\mathcal W\), does not change the value
because \(\mathcal W\) is totally isotropic.  The functional is nonzero,
for otherwise \(x\in\mathcal W^\perp\cap\mathcal V=\mathcal W\), contrary
to \(\Gamma\ne\mathcal W\).  Moreover, distinct cosets induce distinct functionals:
if \(x+\mathcal W\) and \(x'+\mathcal W\) induce the same functional, then
\(x-x'\in\mathcal W^\perp\cap\mathcal V=\mathcal W\), so the cosets are
equal.

Set \(K_\Gamma=\ker\lambda_\Gamma,
 m_\Gamma=|\mathcal Y\cap\Gamma|\), and \(b_\Gamma =\bigl|\{y\in\mathcal E:\lambda_\Gamma(y)=1\}\bigr|\).
Since \(\lambda_\Gamma\) is nonzero, \(K_\Gamma\) is a hyperplane in
\(\mathcal W\), and \(\Gamma\) is the disjoint union of two affine
\(K_\Gamma\)-cosets.  We call these two cosets the affine halves of
\(\Gamma\).

Every vector in \(\Gamma\) has exactly \(b_\Gamma\) neighbors in
\(\mathcal E\).  Indeed, if \(z,z'\in\Gamma\), then
\(z'-z\in\mathcal W\), and for every \(y\in\mathcal E\subseteq\mathcal W\),
\[
 \langle z',y\rangle
 =
 \langle z,y\rangle+\langle z'-z,y\rangle
 =
 \langle z,y\rangle.
\]
It follows that
\begin{equation}
 e(\mathcal E,\mathcal Y)=\sum_\Gamma m_\Gamma b_\Gamma.
\label{eq:q-cosets}
\end{equation}

We next obtain the switching bound that controls the weights
\(m_\Gamma\).  Let \(H\) be one of the two affine halves of an occupied
coset \(\Gamma\).  If \(\mathcal Y\cap H=\varnothing\), there is nothing to
prove.  Otherwise choose \(z\in\mathcal Y\cap H\).  For every
\(w\in\mathcal W\), the equality
\(\langle z,w\rangle=\lambda_\Gamma(w)\) holds, and hence
\[
 \mathcal K:=\mathcal W\cap z^\perp=K_\Gamma.
\]
Define \(\mathcal L=\mathcal K\oplus\langle z\rangle\). The space \(\mathcal K\) is totally isotropic, \(z\) is orthogonal to
\(\mathcal K\), and \(\langle z,z\rangle=0\) because
\(z\in\mathcal V\).  Also \(z\notin\mathcal W\), so
\(\dim\mathcal L=\dim\mathcal W\).  Thus \(\mathcal L\) is a generator.
Moreover, we have \(\mathcal L\cap\mathcal W=\mathcal K\) and
\[
 \mathcal L\setminus\mathcal W=z+\mathcal K=H.
\]
Since \(\mathcal W\) maximizes \(|\mathcal R\cap\mathcal W'|\) over all
generators \(\mathcal W'\), we have
\[
 |\mathcal R\cap\mathcal L|
 \le
 |\mathcal R\cap\mathcal W|
 =|\mathcal E|.
\]
Using the displayed decomposition of \(\mathcal L\), we have the exact
identity
\[
 \mathcal R\cap\mathcal L
 =
 (\mathcal E\cap\mathcal K)
 \sqcup
 (\mathcal Y\cap H).
\]
Since \(|\mathcal E\cap\mathcal K|=|\mathcal E|-b_\Gamma\), it follows
that
\[
 |\mathcal Y\cap H|\le b_\Gamma.
\]
The two affine halves cover \(\Gamma\), so \(m_\Gamma\le 2b_\Gamma\) and hence
\begin{equation}
 \sum_\Gamma m_\Gamma^2
 \le
 2\sum_\Gamma m_\Gamma b_\Gamma
 =2e(\mathcal E,\mathcal Y).
\label{eq:weight-energy}
\end{equation}

We now turn to the Fourier step.  The quantity on the left side of
\eqref{eq:switching-fourier} can be written as a weighted sum of Fourier
coefficients.  Let \(\chi_{\mathcal E}\) denote the indicator function of
\(\mathcal E\) on \(\mathcal W\).  For \(\lambda\in\mathcal W^*\), use the
unnormalized coefficient
\[
 \widehat{\chi_{\mathcal E}}(\lambda)
 =
 \sum_{w\in\mathcal W}
 \chi_{\mathcal E}(w)(-1)^{\lambda(w)}.
\]
For an occupied coset \(\Gamma\),
\[
 \widehat{\chi_{\mathcal E}}(\lambda_\Gamma)
 =
 |\mathcal E|-2b_\Gamma.
\]
Parseval's identity in this normalization gives
\[
 \sum_{\lambda\in\mathcal W^*}
 \widehat{\chi_{\mathcal E}}(\lambda)^2
 =
 |\mathcal W|
 \sum_{w\in\mathcal W}\chi_{\mathcal E}(w)^2
 =
 2^{\lfloor n/2\rfloor}|\mathcal E|.
\]
Since distinct occupied cosets give distinct functionals, we deduce
\begin{equation}
 \sum_\Gamma (|\mathcal E|-2b_\Gamma)^2
 \le
 2^{\lfloor n/2\rfloor}|\mathcal E|.
\label{eq:fourier-energy}
\end{equation}
Finally, by \eqref{eq:q-cosets},
\[
 |\mathcal E||\mathcal Y|-2e(\mathcal E,\mathcal Y)
 =
 \sum_\Gamma m_\Gamma(|\mathcal E|-2b_\Gamma).
\]
The Cauchy--Schwarz inequality, followed by \eqref{eq:weight-energy} and
\eqref{eq:fourier-energy}, yields
\[
 \bigl(|\mathcal E||\mathcal Y|-2e(\mathcal E,\mathcal Y)\bigr)^2
 \le
 \left(\sum_\Gamma m_\Gamma^2\right)
 \left(\sum_\Gamma(|\mathcal E|-2b_\Gamma)^2\right)
 \le
 2^{\lfloor n/2\rfloor+1}|\mathcal E|e(\mathcal E,\mathcal Y).
\]
This proves \eqref{eq:switching-fourier}.  Taking square roots gives
\[
 |\mathcal E||\mathcal Y|-2e(\mathcal E,\mathcal Y)
 \le
 \bigl||\mathcal E||\mathcal Y|-2e(\mathcal E,\mathcal Y)\bigr|
 \le
 \sqrt{2^{\lfloor n/2\rfloor+1}|\mathcal E|e(\mathcal E,\mathcal Y)}.
\]
Adding \(2e(\mathcal E,\mathcal Y)\) and dividing by \(|\mathcal E|\) gives
\eqref{eq:remainder-bound}.
\end{proof}

\begin{corollary}[Large remainder]\label{cor:large-remainder}
Under the assumptions in Lemma~\ref{lem:switching-fourier}, one has
\[
 |\mathcal Y|>2^{\lfloor n/2\rfloor-1}
 \quad\Longrightarrow\quad
 e(\mathcal E,\mathcal Y)
 >
 2^{\lfloor n/2\rfloor-4}|\mathcal E|.
\]
Moreover,
\[
 |\mathcal Y|>\frac58\,2^{\lfloor n/2\rfloor}
 \quad\Longrightarrow\quad
 e(\mathcal E,\mathcal Y)
 >
 \frac{3}{32}\,2^{\lfloor n/2\rfloor}|\mathcal E|.
\]
\end{corollary}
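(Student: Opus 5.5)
The plan is to argue by contraposition from the remainder bound \eqref{eq:remainder-bound} of Lemma~\ref{lem:switching-fourier}. Write \(N=2^{\lfloor n/2\rfloor}\) and normalize the cross-edge density as
\[
 \tau=\frac{e(\mathcal E,\mathcal Y)}{N|\mathcal E|}.
\]
This is well defined because \(\mathcal R\) is nonempty. Any single vector of \(\mathcal V\) forms an eventown family, so the maximum eventown subfamily \(\mathcal E\) is nonempty and \(|\mathcal E|\ge 1\). Dividing \eqref{eq:remainder-bound} by \(N\) turns it into
\[
 \frac{|\mathcal Y|}{N}\le 2\tau+\sqrt{2\tau},
\]
because \(2e(\mathcal E,\mathcal Y)/|\mathcal E|=2\tau N\) and \(2^{\lfloor n/2\rfloor+1}e(\mathcal E,\mathcal Y)/|\mathcal E|=2\tau N^2\). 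The right-hand side \(g(\tau)=2\tau+\sqrt{2\tau}\) is increasing in \(\tau\ge0\). So an upper bound on \(\tau\) gives an upper bound on \(|\mathcal Y|\), and contrapositively a lower bound on \(|\mathcal Y|\) forces a lower bound on \(\tau\).

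For the first implication, suppose \(\tau\le 1/16\). Then
\[
 \frac{|\mathcal Y|}{N}\le g(1/16)=\tfrac18+\sqrt{1/8}\approx 0.479<\tfrac12,
\]
which contradicts \(|\mathcal Y|>N/2\). Hence \(\tau>1/16\), that is, \(e(\mathcal E,\mathcal Y)>2^{\lfloor n/2\rfloor-4}|\mathcal E|\).

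For the second implication, suppose \(\tau\le 3/32\). Then
\[
 \frac{|\mathcal Y|}{N}\le g(3/32)=\tfrac{3}{16}+\sqrt{3/16}\approx 0.1875+0.4330=0.6205<\tfrac58,
\]
again a contradiction, so \(\tau>3/32\).

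The degenerate case \(e(\mathcal E,\mathcal Y)=0\) needs no separate treatment. It corresponds to \(\tau=0\), where the bound gives \(|\mathcal Y|\le 0\), contradicting either hypothesis.

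There is no genuine obstacle here. The only delicate point is verifying the two numerical inequalities \(g(1/16)<1/2\) and \(g(3/32)<5/8\) exactly, not just by decimal approximation. The first amounts to \(\sqrt{1/8}<3/8\), which holds since \(1/8<9/64\). The second amounts to \(\sqrt{3/16}<7/16\), which holds since \(48/256<49/256\). The margin in the second is tight, and this is presumably why the constants \(5/8\) and \(3/32\) were chosen.
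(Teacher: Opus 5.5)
Your proof is correct and follows the paper's argument: assume the edge bound fails, substitute into \eqref{eq:remainder-bound} (using that its right-hand side is monotone in $e(\mathcal E,\mathcal Y)$), and compare with the threshold. Normalizing by $\tau$ and checking $1/8<9/64$ and $48/256<49/256$ just makes exact the two numerical inequalities that the paper states without verification.
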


\begin{proof}
For the first assertion, suppose that
\(e(\mathcal E,\mathcal Y) \le 2^{\lfloor n/2\rfloor-4}|\mathcal E|\).  Then
\eqref{eq:remainder-bound} gives
\[
 |\mathcal Y|
 \le
 2^{\lfloor n/2\rfloor-3}
 +
 2^{\lfloor n/2\rfloor-3/2}
 <
 2^{\lfloor n/2\rfloor-1}.
\]
This contradicts the hypothesis of the first assertion.
For the second assertion, suppose instead that
\[
 e(\mathcal E,\mathcal Y)
 \le
 \frac{3}{32}\,2^{\lfloor n/2\rfloor}|\mathcal E|.
\]
Then \eqref{eq:remainder-bound} yields
\[
 |\mathcal Y|
 \le
 \left(\frac{3}{16}+\frac{\sqrt3}{4}\right)2^{\lfloor n/2\rfloor}
 =
 \frac{3+4\sqrt3}{16}\,2^{\lfloor n/2\rfloor}
 <
 \frac58\,2^{\lfloor n/2\rfloor},
\]
a contradiction.
\end{proof}

\begin{lemma}[Small remainder]\label{lem:small-remainder}
Let \(\mathcal F\subseteq\mathcal V\) be a subset of size \(|\mathcal F|=2^{\lfloor n/2\rfloor}+s\), and let \(\mathcal E\) be a maximum eventown subfamily of \(\mathcal F\).
If \(|\mathcal F\setminus\mathcal E| \le 2^{\lfloor n/2\rfloor-1}\), then
\[
 e(\mathcal F)\ge s\,2^{\lfloor n/2\rfloor-1}.
\]
\end{lemma}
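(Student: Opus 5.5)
The plan is to bound $e(\mathcal F)$ from below by the cross-edges $e(\mathcal E,\mathcal Y)$, where $\mathcal Y=\mathcal F\setminus\mathcal E$, and to ignore the edges inside $\mathcal Y$. The proof needs only Lemma~\ref{lem:maximum-eventown} and the hyperplane structure behind Corollary~\ref{cor:half-neighbors}; the switching--Fourier machinery is not needed in this regime. Write $k=\lfloor n/2\rfloor$ and $t=|\mathcal Y|$. Since $\mathcal E$ is eventown, Berlekamp--Graver (equivalently Lemma~\ref{lem:structure}) gives $|\mathcal E|\le 2^k$, so $t\ge s$ and $|\mathcal E|=2^k+s-t$.

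First I will fix a generator $\mathcal W\supseteq\mathcal E$ using Lemma~\ref{lem:maximum-eventown}; then $\mathcal F\cap\mathcal W=\mathcal E$, so every $y\in\mathcal Y$ lies outside $\mathcal W$. For such $y$, Lemma~\ref{lem:structure} shows that the functional $w\mapsto\langle y,w\rangle$ on $\mathcal W$ is nonzero. Hence $\mathcal W\cap y^\perp$ is a hyperplane of $\mathcal W$, of size $2^{k-1}$. The non-neighbors of $y$ in $\mathcal E$ all lie in this hyperplane, so $y$ has at least $|\mathcal E|-2^{k-1}=2^{k-1}+s-t$ neighbors in $\mathcal E$. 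Summing over $\mathcal Y$ gives
\[
 e(\mathcal F)\ge e(\mathcal E,\mathcal Y)\ge t\,(2^{k-1}+s-t).
\]

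It remains to compare this with $s\,2^{k-1}$. A direct expansion gives
\[
 t(2^{k-1}+s-t)-s\,2^{k-1}=(t-s)(2^{k-1}-t).
\]
Both factors are nonnegative, because $t\ge s$ and $t\le 2^{k-1}$ by hypothesis. This yields $e(\mathcal F)\ge s\,2^{k-1}$.

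I do not expect a real obstacle here. The one point needing care is that the per-vertex count uses only that $\mathcal E$ sits inside a generator avoiding $\mathcal Y$, which Lemma~\ref{lem:maximum-eventown} guarantees. The hypothesis $t\le 2^{k-1}$ is exactly what makes the quadratic factor nonnegative, which is why the complementary regime $t>2^{k-1}$ needs Corollary~\ref{cor:large-remainder} instead.
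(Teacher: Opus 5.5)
Your proof is correct and is essentially the paper's argument: the paper writes $\mathcal T=\mathcal W\setminus\mathcal F$ (so $|\mathcal T|=t-s$) and uses that each $y\in\mathcal Y$ has exactly $2^{k-1}$ neighbors in $\mathcal W$. This gives at least $2^{k-1}-|\mathcal T|=|\mathcal E|-2^{k-1}$ neighbors in $\mathcal E$, the same per-vertex bound as yours. Its final step $|\mathcal X||\mathcal T|\le 2^{k-1}|\mathcal T|$ is exactly your inequality $(t-s)(2^{k-1}-t)\ge 0$.
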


\begin{proof}
Choose a generator \(\mathcal W\) containing \(\mathcal E\).  By Lemma
\ref{lem:maximum-eventown},
\(\mathcal F\cap\mathcal W=\mathcal E\).  Set
\(\mathcal X=\mathcal F\setminus\mathcal W=\mathcal F\setminus\mathcal E\) and
\(\mathcal T=\mathcal W\setminus\mathcal F\).
Then \(|\mathcal X|\le 2^{\lfloor n/2\rfloor-1}\) by hypothesis. Moreover,
\[
 |\mathcal X|-|\mathcal T|=|\mathcal F|-|\mathcal W|=(2^{\lfloor n/2\rfloor}+s)-2^{\lfloor n/2\rfloor}=s.
\]
Note that every vector in \(\mathcal X\) has exactly
\(2^{\lfloor n/2\rfloor-1}\) neighbors in \(\mathcal W\); hence
\[
 e(\mathcal X,\mathcal E)
 =e(\mathcal X,\mathcal W)-e(\mathcal X,\mathcal T)=|\mathcal X|\,2^{\lfloor n/2\rfloor-1}
 -e(\mathcal X,\mathcal T).
\]
Since
\[
 e(\mathcal X,\mathcal T)
 \le
 |\mathcal X||\mathcal T|
 \le
 2^{\lfloor n/2\rfloor-1}|\mathcal T|,
\]
we obtain
\[
 e(\mathcal F)
 \ge
 e(\mathcal X,\mathcal E)
 =|\mathcal X|\,2^{\lfloor n/2\rfloor-1}
 -e(\mathcal X,\mathcal T)
 \ge
 (|\mathcal X|-|\mathcal T|)
 2^{\lfloor n/2\rfloor-1}
 =
 s\,2^{\lfloor n/2\rfloor-1},
\]
completing the proof of the lemma.
\end{proof}

\section{Proof of the main theorem and consequences}\label{sec:main-proof}

\begin{proof}[Proof of Theorem~\ref{thm:main}]
Let \(1\le s\le \frac{1}{26}\,2^{\lfloor n/2\rfloor}\). Assume for a contradiction that \(e(\mathcal F)<s\,2^{\lfloor n/2\rfloor-1}\).

We greedily decompose \(\mathcal F\) into eventown layers.  Set
\(\mathcal R_0=\mathcal F\).  Whenever \(\mathcal R_{i-1}\ne\varnothing\),
choose a maximum eventown subfamily
\(\mathcal E_i\subseteq\mathcal R_{i-1}\), and define
\[
 \mathcal R_i=\mathcal R_{i-1}\setminus\mathcal E_i,
\]
and let \(a_i=|\mathcal E_i|\).
The process ends when some remainder is empty.  Since the remainders are
nested, every eventown subfamily of \(\mathcal R_i\) is also an eventown
subfamily of \(\mathcal R_{i-1}\).  Therefore
\begin{equation}
 a_1\ge a_2\ge\cdots.
\label{eq:layer-monotonicity}
\end{equation}

The cross-edge sets between each layer \(\mathcal E_i\) and the corresponding
remainder \(\mathcal R_i\) form an exact partition of the edges of
\(\mathcal F\).  Indeed, each \(\mathcal E_i\) is eventown and hence has
no internal edges.  If the endpoints of an edge lie in distinct layers, then
the endpoint in the earlier layer \(\mathcal E_i\) is paired with an endpoint
that still belongs to \(\mathcal R_i\).  The edge is counted there and in no
other term.  Consequently,
\begin{equation}
 e(\mathcal F)
 =
 \sum_i e(\mathcal E_i,\mathcal R_i).
\label{eq:edge-partition}
\end{equation}

If
\(|\mathcal R_1|\le2^{\lfloor n/2\rfloor-1}\), then Lemma
\ref{lem:small-remainder}, applied to the maximum eventown subfamily
\(\mathcal E_1\), gives the required lower bound, contrary to our
assumption.  Hence
\begin{equation}
 |\mathcal R_1|>2^{\lfloor n/2\rfloor-1}.
\label{eq:first-remainder-large}
\end{equation}

We first dispose of the intermediate range.  If
\[
 |\mathcal R_1|\le \frac58\,2^{\lfloor n/2\rfloor},
\]
then the first assertion of Corollary~\ref{cor:large-remainder}, applied to
\(\mathcal R_0\), gives
\[
 e(\mathcal F)
 \ge e(\mathcal E_1,\mathcal R_1)
 >2^{\lfloor n/2\rfloor-4}a_1.
\]
Moreover,
\[
 a_1=|\mathcal F|-|\mathcal R_1|
 \ge \frac38\,2^{\lfloor n/2\rfloor}+s.
\]
Since \(s\le 2^{\lfloor n/2\rfloor}/26\), the displayed lower bound for
\(a_1\) exceeds \(8s\).  Hence
\[
 e(\mathcal F)>2^{\lfloor n/2\rfloor-4}\cdot8s
 =s\,2^{\lfloor n/2\rfloor-1},
\]
a contradiction.  Therefore
\begin{equation}
 |\mathcal R_1|>\frac58\,2^{\lfloor n/2\rfloor}.
\label{eq:first-remainder-very-large}
\end{equation}

We now let \(i_0\ge1\) be the largest index such that
\[
 |\mathcal R_{i_0}|>\frac58\,2^{\lfloor n/2\rfloor}.
\]
The existence of \(i_0\) follows from \eqref{eq:first-remainder-very-large}, and
the process terminates because \(\mathcal F\) is finite.  We then have
\[
 |\mathcal R_i|>\frac58\,2^{\lfloor n/2\rfloor}
 \quad(1\le i\le i_0),
 \qquad
 |\mathcal R_{i_0+1}|
 \le
 \frac58\,2^{\lfloor n/2\rfloor}.
\]
For each \(1\le i\le i_0\), apply the second assertion of Corollary
\ref{cor:large-remainder} to
\(\mathcal R_{i-1}\), its maximum eventown subfamily
\(\mathcal E_i\), and the remainder \(\mathcal R_i\).  This gives
\[
 e(\mathcal E_i,\mathcal R_i)
 >
 \frac{3}{32}\,2^{\lfloor n/2\rfloor}a_i.
\]
Combining these inequalities with \eqref{eq:edge-partition}, we obtain
\begin{equation}
 e(\mathcal F)>\frac{3}{32}\,2^{\lfloor n/2\rfloor}
 \sum_{i=1}^{i_0}a_i.
\label{eq:edges-from-layers}
\end{equation}

It remains to estimate \(\sum_{i=1}^{i_0}a_i\).  By \eqref{eq:layer-monotonicity} and
\(i_0\ge1\), one has
\[
 a_{i_0+1}\le a_{i_0}\le \sum_{i=1}^{i_0}a_i.
\]
On the other hand, the first \(i_0+1\) layers contain every element of
\(\mathcal F\) outside \(\mathcal R_{i_0+1}\), and therefore
\[
 a_{i_0+1}+\sum_{i=1}^{i_0}a_i
 =|\mathcal F|-|\mathcal R_{i_0+1}|
 \ge \frac38\,2^{\lfloor n/2\rfloor}+s.
\]
Since \(a_{i_0+1}\le \sum_{i=1}^{i_0}a_i\), it follows that
\begin{equation}
 \sum_{i=1}^{i_0}a_i\ge
 \frac{a_{i_0+1}+\sum_{i=1}^{i_0}a_i}{2}\ge
 \frac{\frac38\,2^{\lfloor n/2\rfloor}+s}{2}.
\label{eq:layer-mass}
\end{equation}

The assumption \(s\le 2^{\lfloor n/2\rfloor}/26\) implies
\[
 \frac38\,2^{\lfloor n/2\rfloor}+s
 >\frac{32}{3}s.
\]
Thus \eqref{eq:layer-mass} shows
\(\sum_{i=1}^{i_0}a_i>\frac{16}{3}s\). Together with
\eqref{eq:edges-from-layers}, this yields
\[
 e(\mathcal F)
 >
 \frac{3}{32}\,2^{\lfloor n/2\rfloor}\cdot\frac{16}{3}s
 =
 s\,2^{\lfloor n/2\rfloor-1},
\]
contradicting the choice of \(\mathcal F\).
\end{proof}

\begin{remark}\label{rem:constants}
The constants \(5/8\), \(3/32\), and \(1/26\) are chosen for convenient arithmetic
rather than optimality.  The second assertion of
Corollary~\ref{cor:large-remainder} is obtained by moving the stopping
threshold above one half, while the first assertion is retained to handle the
intermediate range
\[
 2^{\lfloor n/2\rfloor-1}<|\mathcal R_1|
 \le \frac58\,2^{\lfloor n/2\rfloor}.
\]
This keeps the proof elementary and avoids introducing an additional optimization parameter.
\end{remark}

Theorem~\ref{thm:main} can also be used to give
the following lower bound.

\begin{proposition}\label{prop:averaging}
Let \(\mathcal F\subseteq\mathcal V\) satisfy
\(|\mathcal F|=2^{\lfloor n/2\rfloor}+s\) for an integer \(s\ge0\), and define
\(r=
 \min\left\{
 s, \lfloor\frac{2^{\lfloor n/2\rfloor}}{26}\rfloor
 \right\}\). Then
\begin{equation}
 e(\mathcal F)
 \ge
 r\,2^{\lfloor n/2\rfloor-1}
 \cdot
 \frac{
 (2^{\lfloor n/2\rfloor}+s)
 (2^{\lfloor n/2\rfloor}+s-1)
 }{
 (2^{\lfloor n/2\rfloor}+r)
 (2^{\lfloor n/2\rfloor}+r-1)
 }.
\label{eq:averaging-bound}
\end{equation}
Consequently,
\[
 e(\mathcal F)
 \ge
 \max\left\{
 \frac{s(2^{\lfloor n/2\rfloor}+s)}{4},
 r\,2^{\lfloor n/2\rfloor-1}
 \cdot
 \frac{
 (2^{\lfloor n/2\rfloor}+s)
 (2^{\lfloor n/2\rfloor}+s-1)
 }{
 (2^{\lfloor n/2\rfloor}+r)
 (2^{\lfloor n/2\rfloor}+r-1)
 }
 \right\}.
\]
\end{proposition}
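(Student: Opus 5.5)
The plan is a standard averaging (Katona-type) argument over subfamilies of size $N+r$, where $N=2^{\lfloor n/2\rfloor}$.

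\textbf{Trivial cases.} If $s=0$, then $r=0$ and both sides of \eqref{eq:averaging-bound} vanish. If $N<26$, then $r=0$ again and there is nothing to prove. So assume $1\le r\le s$; in particular $1\le r\le N/26$.

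\textbf{Double counting.} Consider every subfamily $\mathcal G\subseteq\mathcal F$ with $|\mathcal G|=N+r$. Each such $\mathcal G$ has excess exactly $r$, and $1\le r\le N/26$, so Theorem~\ref{thm:main} gives $e(\mathcal G)\ge rN/2$. Now count pairs (edge of $\mathcal F$, subfamily containing both endpoints). Each edge of $\mathcal F$ lies in exactly $\binom{N+s-2}{N+r-2}$ of these subfamilies. Hence
\[
 e(\mathcal F)\binom{N+s-2}{N+r-2}=\sum_{\mathcal G}e(\mathcal G)\ge \binom{N+s}{N+r}\frac{rN}{2}.
\]
The ratio of binomial coefficients is
\[
 \frac{\binom{N+s}{N+r}}{\binom{N+s-2}{N+r-2}}=\frac{(N+s)(N+s-1)}{(N+r)(N+r-1)}.
\]
This is exactly \eqref{eq:averaging-bound}.

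\textbf{The first term of the maximum.} For the "consequently" part, the second entry of the maximum is \eqref{eq:averaging-bound} itself, so it remains to show $e(\mathcal F)\ge s(N+s)/4$. My first approach would be to invoke the general bound of Wei--Zhao--Zhang--Ge, but that result is not available in this paper. So I would derive it from the same averaging identity, using the $s=1$ case. Every subfamily of size $N+1$ contains an edge, by the eventown theorem; in fact Theorem~\ref{thm:main} with $s=1$ gives at least $N/2$ edges. Averaging over $(N+1)$-subsets then gives
\[
 e(\mathcal F)\ge \frac N2\cdot\frac{(N+s)(N+s-1)}{(N+1)N}=\frac{(N+s)(N+s-1)}{2(N+1)}.
\]
Check whether this dominates $s(N+s)/4$. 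It does if $2(N+s-1)\ge s(N+1)$, which is false for large $s$. So this route fails, and I need a better tool.

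\textbf{Alternative for the first term.} Use the greedy layer decomposition \eqref{eq:edge-partition} together with Lemma~\ref{lem:switching-fourier}. Since $a_1\le N$, a layer has at least $|\mathcal R_{i-1}|/N$ elements... This is the main obstacle. The cleanest attempt is to apply \eqref{eq:remainder-bound} with $\mathcal E=\mathcal E_1$ and $\mathcal Y=\mathcal R_1$, which bounds $e(\mathcal E_1,\mathcal R_1)$ from below in terms of $|\mathcal R_1|$ and $a_1$. Then iterate on $\mathcal R_1$ by induction on $|\mathcal F|$, with the inductive claim $e(\mathcal F)\ge (|\mathcal F|-N)|\mathcal F|/4$.

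Completing this step requires an inequality of the form $e(\mathcal E,\mathcal Y)\ge a|\mathcal Y|/4$ with $a=|\mathcal E|$. The term $2e/a$ in \eqref{eq:remainder-bound} suggests the weaker $e\gtrsim a(|\mathcal Y|-\sqrt{\cdot})/2$. Verifying that this suffices after the induction is the delicate arithmetic I expect to be hardest, and I would try it first by solving the quadratic in $\sqrt{e}$ coming from \eqref{eq:switching-fourier}.
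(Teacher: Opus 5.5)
Your derivation of \eqref{eq:averaging-bound} is exactly the paper's argument: apply Theorem~\ref{thm:main} to every subfamily of size $N+r$, double count, and divide by $\binom{N+s-2}{N+r-2}$. That part is correct.

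\textbf{The gap.} The first entry of the maximum, $e(\mathcal F)\ge s(N+s)/4$, is never established in your proposal. Your reason for not using the Wei--Zhao--Zhang--Ge bound is mistaken: the paper proves this entry precisely by citing \cite{WeiZhaoZhangGe} and taking the maximum of the two bounds.

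Your substitute routes do not close. The $s=1$ averaging fails, as you noticed. The layer induction also cannot be finished using Lemma~\ref{lem:switching-fourier} alone. Write $a=a_1$ and $y=|\mathcal R_1|$. The inductive step needs $e(\mathcal E_1,\mathcal R_1)\ge a(y+s)/4$, which is stronger than the $ay/4$ you wrote. At $e=a(y+s)/4$ one has $ay-2e=a(N-a)/2$. Since $y+s=N-a+2s$ and $0\le N-a\le N$, we get $(N-a)^2<2N(y+s)$. Hence \eqref{eq:switching-fourier} holds strictly at this value of $e$, so it cannot exclude values of $e(\mathcal E_1,\mathcal R_1)$ just below the required threshold.

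\textbf{The missing idea.} What you need is a one-line global Fourier bound on $\mathcal F$ itself, with no layers. This is the standard quadratic estimate the paper cites. Take $G=\mathbb F_2^n$ if $n$ is even, or $G=\mathcal V$ if $n$ is odd; in the odd case the form is nondegenerate on $\mathcal V$ by the proof of Lemma~\ref{lem:structure}. In both cases $|G|=N^2$. For $x\in G$ put $\widehat f(x)=\sum_{y\in\mathcal F}(-1)^{\langle x,y\rangle}$.

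Since $\langle x,x\rangle=0$ on $\mathcal V$, the diagonal terms contribute $+|\mathcal F|$. Each odd pair contributes $-2$ instead of $+2$. Parseval gives $\sum_{x\in G}\widehat f(x)^2=|G|\,|\mathcal F|$, and Cauchy--Schwarz then yields
\[
 |\mathcal F|^2-4e(\mathcal F)
 =\sum_{x\in\mathcal F}\widehat f(x)
 \le |\mathcal F|^{1/2}\Bigl(\sum_{x\in G}\widehat f(x)^2\Bigr)^{1/2}
 =N|\mathcal F|.
\]
Hence $4e(\mathcal F)\ge|\mathcal F|(|\mathcal F|-N)=s(N+s)$. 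Either this argument or the citation completes the ``consequently'' part.
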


\begin{proof}
If \(r=0\), the right-hand side of \eqref{eq:averaging-bound} is zero, so
that inequality is immediate.  Suppose that \(r\ge1\).  Consider every
subfamily \(\mathcal B\subseteq\mathcal F\) of size
\(2^{\lfloor n/2\rfloor}+r\).  The definition of \(r\) allows us to apply
Theorem~\ref{thm:main} to each \(\mathcal B\), giving
\[
 e(\mathcal B)\ge r\,2^{\lfloor n/2\rfloor-1}.
\]
Each edge of \(\mathcal F\) belongs to exactly
\[
 \binom{2^{\lfloor n/2\rfloor}+s-2}
       {2^{\lfloor n/2\rfloor}+r-2}
\]
of these subfamilies.  Double counting pairs
\((\mathcal B,\text{edge of }\mathcal B)\) gives
\[
%\begin{aligned}
 e(\mathcal F)
 \binom{2^{\lfloor n/2\rfloor}+s-2}
       {2^{\lfloor n/2\rfloor}+r-2}
 \ge
 \binom{2^{\lfloor n/2\rfloor}+s}
       {2^{\lfloor n/2\rfloor}+r} 
 \cdot r\,2^{\lfloor n/2\rfloor-1}.
%\end{aligned}
\]
Dividing by the first binomial coefficient proves
\eqref{eq:averaging-bound}.  The standard quadratic Fourier estimate
\[
 e(\mathcal F)
 \ge
 \frac{s\bigl(2^{\lfloor n/2\rfloor}+s\bigr)}{4}
\]
is proved in \cite{WeiZhaoZhangGe}; taking the maximum of the two bounds gives
the final assertion.
\end{proof}

\section{Stability and removal in the large-core regime}\label{sec:stability}

We next apply Lemma~\ref{lem:switching-fourier} to families of extremal size
that have a large intersection with a generator.

\begin{theorem}\label{thm:stability}
Let \(\mathcal F\subseteq\mathcal V\) satisfy
\(|\mathcal F|=2^{\lfloor n/2\rfloor}\), and let \(\mathcal W\) be a generator maximizing
\(|\mathcal F\cap\mathcal W|\).  If
\(
 |\mathcal F\cap\mathcal W|
 \ge
 2^{\lfloor n/2\rfloor-1}\), then
\begin{equation}
 |\mathcal F\setminus\mathcal W|
 \le
 \frac{2e(\mathcal F)}{|\mathcal F\cap\mathcal W|}
 +
 \sqrt{
       \frac{2^{\lfloor n/2\rfloor+1}e(\mathcal F)}
            {|\mathcal F\cap\mathcal W|}
      }.
\label{eq:stability-exact}
\end{equation}
In particular,
\(
 |\mathcal F\setminus\mathcal W|
 \le 6\sqrt{e(\mathcal F)}\) and 
 \(
 |\mathcal F\triangle\mathcal W|
 \le 12\sqrt{e(\mathcal F)}\).
Thus, deleting at most \(6\sqrt{e(\mathcal F)}\) members yields an
eventown family.
\end{theorem}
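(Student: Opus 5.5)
We apply Lemma~\ref{lem:switching-fourier} with \(\mathcal R=\mathcal F\). By the converse part of Lemma~\ref{lem:maximum-eventown}, since \(\mathcal W\) maximizes \(|\mathcal F\cap\mathcal W|\), the set \(\mathcal E:=\mathcal F\cap\mathcal W\) is a maximum eventown subfamily of \(\mathcal F\). Set \(\mathcal Y=\mathcal F\setminus\mathcal W=\mathcal F\setminus\mathcal E\). Inequality \eqref{eq:remainder-bound} then bounds \(|\mathcal Y|\) in terms of \(e(\mathcal E,\mathcal Y)\). The right-hand side is increasing in this quantity, and \(e(\mathcal E,\mathcal Y)\le e(\mathcal F)\). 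Replacing \(e(\mathcal E,\mathcal Y)\) by \(e(\mathcal F)\) therefore gives \eqref{eq:stability-exact} directly. If \(\mathcal E\) were empty the hypothesis would fail, so \(|\mathcal E|\ge 2^{\lfloor n/2\rfloor-1}>0\) and the division is legitimate.

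For the simplified bounds, write \(N=2^{\lfloor n/2\rfloor}\) and \(e=e(\mathcal F)\), and use \(|\mathcal E|\ge N/2\). Then \eqref{eq:stability-exact} becomes
\[
 |\mathcal Y|\le \frac{4e}{N}+2\sqrt{e}.
\]
To make this at most \(6\sqrt e\), it suffices that \(4e/N\le 4\sqrt e\), that is, \(e\le N^2\). We split into two cases.

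If \(e\le N^2\), the bound \(|\mathcal Y|\le 6\sqrt e\) follows at once.

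If \(e>N^2\), we do not use the lemma. Instead we use the trivial bound \(|\mathcal Y|\le|\mathcal F|=N<\sqrt e\le 6\sqrt e\).

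The case \(e=0\) needs a remark. Here \(\mathcal F\) is eventown of size \(N\), hence a generator, so \(|\mathcal Y|=0\). This is consistent with the inequality, which gives \(0\) directly.

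For the symmetric difference, note that \(|\mathcal F|=|\mathcal W|=N\), so \(|\mathcal W\setminus\mathcal F|=|\mathcal F\setminus\mathcal W|\). Hence \(|\mathcal F\triangle\mathcal W|=2|\mathcal Y|\le 12\sqrt e\).

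For removal, deleting the members of \(\mathcal Y\) leaves \(\mathcal F\cap\mathcal W\). This is a subset of the totally isotropic space \(\mathcal W\) and consists of even-sized sets, so it is eventown. The number of deletions is \(|\mathcal Y|\le 6\sqrt e\).

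The only mildly delicate point is the case distinction in the constant. The linear term \(4e/N\) can dominate only when \(e\) exceeds \(N^2\), and in that regime the trivial bound \(|\mathcal Y|\le N\) already suffices.
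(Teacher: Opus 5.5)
Your proof is correct and follows the paper's argument: you identify \(\mathcal F\cap\mathcal W\) as a maximum eventown subfamily via Lemma~\ref{lem:maximum-eventown}, apply \eqref{eq:remainder-bound} with \(e(\mathcal E,\mathcal Y)\le e(\mathcal F)\), and then use \(|\mathcal E|\ge N/2\) together with \(e\le N^2\). The only difference is that your case \(e>N^2\) never occurs, because \(e(\mathcal F)\le\binom{N}{2}<N^2\) always holds; the paper uses this observation to avoid the case split.
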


\begin{proof}
Set
\(\mathcal E=\mathcal F\cap\mathcal W\) and 
 \(\mathcal Y=\mathcal F\setminus\mathcal W\). 
By Lemma~\ref{lem:maximum-eventown}, \(\mathcal E\) is a maximum eventown
subfamily of \(\mathcal F\).  Since
\(e(\mathcal E,\mathcal Y)\le e(\mathcal F)\), inequality
\eqref{eq:remainder-bound} gives \eqref{eq:stability-exact}.

The lower bound on \(|\mathcal E|\) implies
\[
 |\mathcal Y|
 \le
 2^{2-\lfloor n/2\rfloor}e(\mathcal F)
 +2\sqrt{e(\mathcal F)}.
\]
Since \(|\mathcal F|=2^{\lfloor n/2\rfloor}\),
\[
 e(\mathcal F)
 \le
 \binom{2^{\lfloor n/2\rfloor}}{2}
 <
 2^{2\lfloor n/2\rfloor}.
\]
Hence
\[
 2^{2-\lfloor n/2\rfloor}e(\mathcal F)
 \le
 4\sqrt{e(\mathcal F)},
\]
which proves
\(|\mathcal Y|\le6\sqrt{e(\mathcal F)}\).  Since
\(|\mathcal F|=|\mathcal W|\), we have
\[
 |\mathcal F\triangle\mathcal W|=2|\mathcal Y|,
\]
and the symmetric-difference estimate follows.  Finally,
\(\mathcal F\cap\mathcal W\) is eventown, so deleting \(\mathcal Y\)
proves the removal statement.
\end{proof}

\begin{corollary}\label{cor:small-edge}
Let \(\mathcal F\subseteq\mathcal V\) satisfy
\(|\mathcal F|=2^{\lfloor n/2\rfloor}\) and \(e(\mathcal F)<2^{\lfloor n/2\rfloor-1}\). 
Then there is a generator \(\mathcal W\) such that
\[
 |\mathcal F\triangle\mathcal W|
 \le12\sqrt{e(\mathcal F)}.
\]
In particular, deleting at most \(6\sqrt{e(\mathcal F)}\) members of
\(\mathcal F\) leaves an eventown family.
\end{corollary}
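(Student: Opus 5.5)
The plan is to reduce Corollary~\ref{cor:small-edge} to Theorem~\ref{thm:stability}. The only hypothesis of that theorem we must check is that a generator \(\mathcal W\) maximizing \(|\mathcal F\cap\mathcal W|\) satisfies \(|\mathcal F\cap\mathcal W|\ge 2^{\lfloor n/2\rfloor-1}\). Once this is verified, Theorem~\ref{thm:stability} gives \(|\mathcal F\triangle\mathcal W|\le 12\sqrt{e(\mathcal F)}\) and the removal statement directly. So the whole argument is about showing that a family of extremal size with fewer than \(2^{\lfloor n/2\rfloor-1}\) edges has a large eventown core.

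To get the core bound, I will fix a generator \(\mathcal W\) maximizing \(|\mathcal F\cap\mathcal W|\). By Lemma~\ref{lem:maximum-eventown}, \(\mathcal E=\mathcal F\cap\mathcal W\) is a maximum eventown subfamily of \(\mathcal F\); write \(\mathcal Y=\mathcal F\setminus\mathcal W\).

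The key observation is that \(\mathcal E\) is also a maximum independent set in the odd-intersection graph induced on \(\mathcal F\). Since every vertex of \(\mathcal F\) outside a maximal independent set has a neighbor in it, we get
\[
|\mathcal F| - |\mathcal E| \le e(\mathcal F),
\]
and hence
\[
|\mathcal E|\ge 2^{\lfloor n/2\rfloor}-e(\mathcal F) > 2^{\lfloor n/2\rfloor}-2^{\lfloor n/2\rfloor-1}=2^{\lfloor n/2\rfloor-1}.
\]
Equivalently: each \(y\in\mathcal Y\) must have at least one neighbor in \(\mathcal E\), for otherwise \(\mathcal E\cup\{y\}\) would be a larger eventown subfamily. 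This uses the fact that \(y\in\mathcal V\) is even-sized, so \(\langle y,y\rangle=0\). Therefore \(|\mathcal Y|\le e(\mathcal E,\mathcal Y)\le e(\mathcal F)<2^{\lfloor n/2\rfloor-1}\). A Turán-type bound would give more, but this simple count suffices.

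With \(|\mathcal F\cap\mathcal W|>2^{\lfloor n/2\rfloor-1}\) established, I will invoke Theorem~\ref{thm:stability} for this \(\mathcal W\) to conclude. There is no real obstacle. The only point needing care is the maximality argument: \(\mathcal E\) is maximum among all eventown subfamilies, hence maximal under inclusion. One should also note the degenerate case \(e(\mathcal F)=0\). There \(\mathcal Y=\varnothing\), so \(\mathcal F\) is itself a generator (it is eventown of maximum size, and its span is totally isotropic of the right dimension), and the bound \(|\mathcal F\triangle\mathcal W|\le 0\) holds trivially.
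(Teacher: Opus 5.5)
Your proposal is correct and matches the paper's proof. Both arguments use that a maximum eventown subfamily $\mathcal E$ is maximal by inclusion, so each member of $\mathcal F\setminus\mathcal E$ contributes a distinct edge; this forces $|\mathcal E|>2^{\lfloor n/2\rfloor-1}$, and then Theorem~\ref{thm:stability} applies. The only differences are cosmetic: you start from a maximizing generator and use the converse direction of Lemma~\ref{lem:maximum-eventown}, whereas the paper embeds a maximum eventown subfamily in a generator; and your separate treatment of $e(\mathcal F)=0$ is unnecessary, since the theorem already covers that case.
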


\begin{proof}
Let \(\mathcal E\) be a maximum eventown subfamily of \(\mathcal F\).  It
is also maximal by inclusion, so every element of
\(\mathcal F\setminus\mathcal E\) has a neighbor in \(\mathcal E\).  If
\(|\mathcal E|\le2^{\lfloor n/2\rfloor-1}\), then
\[
 e(\mathcal F)
 \ge
 e(\mathcal E,\mathcal F\setminus\mathcal E)
 \ge
 |\mathcal F\setminus\mathcal E|
 \ge
 2^{\lfloor n/2\rfloor-1},
\]
contrary to the hypothesis.  Thus
\(|\mathcal E|>2^{\lfloor n/2\rfloor-1}\).  Choose a generator
\(\mathcal W\) containing \(\mathcal E\).  Lemma
\ref{lem:maximum-eventown} shows that \(\mathcal W\) maximizes
\(|\mathcal F\cap\mathcal W|\) and that
\(\mathcal F\cap\mathcal W=\mathcal E\).  Theorem
\ref{thm:stability} now applies.
\end{proof}

\begin{remark}
The large-core assumption in Theorem~\ref{thm:stability} is automatic under
the stronger hypothesis in Corollary~\ref{cor:small-edge}.  A full stability
theorem would require showing, in a substantially wider edge range, that a
family with few odd-intersection pairs has a large intersection with some
generator.
\end{remark}

\section{Concluding remarks}\label{sec:conclusion}

In this work, we proved the optimal bound for the supersaturation problem in a wide range. The proof of Theorem~\ref{thm:main} uses one-dimensional switches, and the
constant arises from the threshold in Corollary~\ref{cor:large-remainder} and the greedy layer decomposition. It would be interesting to determine whether
switching arguments involving several cosets outside the generator can extend
Theorem~\ref{thm:main} to a larger part of the range in
Conjecture~\ref{conj:oneill}.

\section*{Declarations}

During the development of this work, the authors used OpenAI GPT-5.5 as a research-assistance tool to explore possible improvements of the argument of Wei, Zhao, Zhang and Ge. In particular, GPT-5.5 was used at an intermediate stage to investigate whether their range \(s\le 2^{\lfloor n/8\rfloor}/n\) could be extended, and this exploration led to a preliminary bound of order \(2^{\lfloor n/6 \rfloor}\). Building on this intermediate observation, the authors further developed the argument by introducing a layered greedy decomposition together with Fourier-analytic and switching ideas, which ultimately led to the results presented in this paper. All mathematical statements, proofs, computations, and citations in the final manuscript were independently checked and verified by the authors, who take full responsibility for the content of the paper.

%\section*{Acknowledgements}
%The author thanks the authors of the recent works on eventown supersaturation for motivating this problem.

% Use \bibliographystyle{siamplain} with the official SIAM macro package.
\bibliographystyle{plain}
\bibliography{eventown_references}

\end{document}